\newtheorem{theorem}{Theorem}
\newtheorem{corollary}[theorem]{Corollary}
\newtheorem{lemma}[theorem]{Lemma}
\newtheorem{proposition}[theorem]{Proposition}
\newenvironment{proof}[1][Proof]{\noindent\textbf{#1.} }{\ \rule{0.5em}{0.5em}}
\begin{document}

\title{{\LARGE \textbf{Blow up of mild solutions of a system of partial
differential equations with distinct fractional diffusions}}}
\author{ Jos\'{e} Villa-Morales\thanks{
Supported by the grants No.118294 of CONACyT and PIM10-2N of UAA.}}
\date{ }
\maketitle

\begin{abstract}
We give a sufficient condition for blow up of positive mild solutions to an
initial value problem for a nonautonomous weakly coupled system with
distinct fractional diffusions. The proof is based on the study of blow up
of a particular system of ordinary differential equations.
\end{abstract}

{\small \noindent \textbf{Key words: Blow up, weakly coupled system, mild
solution, nonautonomous initial value problem, fractal diffusion.}}

{\small {\noindent \emph{Mathematics Subject Classification: Primary {35K55}%
, 35K45; Secondary {35B40}, 35K20.}}}

\section{Introduction: statement of the result and ov\-er\-view}

Let $i\in \{1,2\}$ and $j=3-i$. In this paper we study blow up of positive
mild solutions of 
\begin{eqnarray}
\frac{\partial u_{i}\left( t,x\right) }{\partial t} &=&g_{i}\left( t\right)
\Delta _{\alpha _{i}}u_{i}\left( t,x\right) +h_{i}\left( t\right)
u_{j}^{\beta _{i}}\left( t,x\right) ,\text{\ \ }t>0,\text{ }x\in \mathbb{R}%
^{d},  \label{wcs} \\
u_{i}\left( 0,x\right) &=&\varphi _{i}\left( x\right) ,\text{\ \ }x\in 
\mathbb{R}^{d},  \notag
\end{eqnarray}%
where $\Delta _{\alpha _{i}}=-\left( -\Delta \right) ^{\alpha _{i}/2}$, $%
0<\alpha _{i}\leq 2$, is the $\alpha _{i}$-Laplacian, $\beta _{i}\geq 1$ are
constants, $\varphi _{i}$ are non negative, not identically zero, bounded
continuous functions and $h_{i},g_{i}:(0,\infty )\rightarrow \left[ 0,\infty
\right) $ are continuous functions.

If there exist a solution $\left( u_{1},u_{2}\right) $ of (\ref{wcs})
defined in $\left[ 0,\infty \right) \times \mathbb{R}^{d}$, we say that $%
\left( u_{1},u_{2}\right) $ is a global solution, on the other hand if there
exists a number $t_{e}<\infty $ such that $\left( u_{1},u_{2}\right) $ is
unbounded in $\left[ 0,t\right] \times \mathbb{R}^{d}$, for each $t>t_{e}$,
we say that $\left( u_{1},u_{2}\right) $ blows up in finite time.

The associated integral system of (\ref{wcs}) is%
\begin{eqnarray}
u_{i}(t,x) &=&\int_{\mathbb{R}^{d}}p_{i}\left( G_{i}\left( t\right)
,y-x\right) \varphi _{i}(y)dy  \label{ecintegrl} \\
&&+\int\nolimits_{0}^{t}\int\nolimits_{\mathbb{R}^{d}}p_{i}\left(
G_{i}\left( s,t\right) ,y-x\right) h_{i}\left( s\right) u_{j}^{\beta
_{i}}\left( s,y\right) dyds.  \notag
\end{eqnarray}%
Here $p_{i}\left( t,x\right) $ denote the fundamental solution of $\frac{%
\partial }{\partial t}-\Delta _{\alpha _{i}}$ and 
\begin{equation*}
G_{i}\left( s,t\right) =\int\nolimits_{s}^{t}g_{i}\left( r\right) dr,\ \
0\leq s\leq t,
\end{equation*}%
where $G_{i}(t)=G_{i}(0,t)$. We say that $\left( u_{1},u_{2}\right) $ is a
mild solution of (\ref{wcs}) if $\left( u_{1},u_{2}\right) $ is a solution
of (\ref{ecintegrl}).

The main result is:

\begin{theorem}
\label{TeoPr}Assume that $\beta _{i}\beta _{j}>1$ and%
\begin{equation}
\lim_{t\rightarrow \infty }G_{i}\left( t\right) =\infty .  \label{condG}
\end{equation}%
Let $a\in \{1,2\}$ such that 
\begin{equation}
\alpha _{a}=\min \{\alpha _{1},\alpha _{2}\}\text{ \ and \ }b=3-a.
\label{defa}
\end{equation}%
Define%
\begin{equation}
f_{i}(t)=h_{i}\left( t\right) \left( \frac{G_{b}\left( t\right) }{%
(G_{j}\left( t\right) ^{\alpha _{b}/\alpha _{j}}+G_{b}\left( t\right)
)^{\beta _{i}}}\right) ^{d/\alpha _{b}},\ \ t>0.  \label{dfi}
\end{equation}%
Then the positive solution of (\ref{ecintegrl}) blows up in finite time if 
\begin{equation}
\int_{\cdot }^{\infty }F(s)ds=\infty ,  \label{cenfexplo}
\end{equation}%
where%
\begin{equation}
F(t)=\left( f_{i}(t)^{1/(\beta _{i}+1)}f_{j}(t)^{1/(\beta _{j}+1)}\right)
^{(\beta _{i}+1)(\beta _{j}+1)/(\beta _{i}+\beta _{j}+2)}.  \label{dfexpl}
\end{equation}
\end{theorem}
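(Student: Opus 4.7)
The strategy is classical: reduce the two-component integral system (\ref{ecintegrl}) to a two-dimensional system of differential inequalities of the form $w_i'(t)\ge c\,f_i(t)w_j(t)^{\beta_i}$, and then prove blow up of this ODE system under the conditions $\beta_i\beta_j>1$ and $\int^{\infty} F(s)ds=\infty$. Throughout, (\ref{condG}) will guarantee that the relevant denominators tend to infinity, so the integral criterion (\ref{cenfexplo}) becomes the only effective obstruction.

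First I would fix $x_0\in\mathbb{R}^d$ and a ball $B$ around it on which both $\varphi_1$ and $\varphi_2$ are bounded below by a positive constant, and introduce
\begin{equation*}
w_i(t)=\int_{\mathbb{R}^d}p_a\bigl(\lambda_i(t),x-x_0\bigr)u_i(t,x)\,dx,\qquad \lambda_i(t):=G_j(t)^{\alpha_b/\alpha_j}+G_b(t),
\end{equation*}
with $a,b$ as in (\ref{defa}) and $j=3-i$. The scale $\lambda_i$ is chosen so that, via the self-similarity $p_\alpha(t,x)=t^{-d/\alpha}p_\alpha(1,t^{-1/\alpha}x)$, it captures both the $u_j$-diffusion length $G_j^{1/\alpha_j}$ (converted to the reference $\alpha_b$-scale through the exponent $\alpha_b/\alpha_j$) and the $u_b$-diffusion length $G_b^{1/\alpha_b}$. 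Substituting (\ref{ecintegrl}) into $w_i$ and applying Fubini, the crucial technical input is the lower bound
\begin{equation*}
\int_{\mathbb{R}^d}p_a(\lambda_i(t),x-x_0)\,p_i(G_i(s,t),y-x)\,dx\ge c\,\lambda_i(t)^{-d/\alpha_b},
\end{equation*}
valid uniformly for $s\le t$ and $y$ in a compact neighbourhood of $x_0$; this will follow from the on-diagonal behaviour $p_\alpha(t,0)\asymp t^{-d/\alpha}$ and the polynomial tails of stable kernels. The matching algebraic rearrangement then places exactly the weight $f_i(t)$ from (\ref{dfi}) in front of the nonlinear time-integral.

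Next, Jensen's inequality applied to the probability density $p_a(\lambda_i(t),\cdot-x_0)$, together with the convexity of $u\mapsto u^{\beta_i}$ ($\beta_i\ge 1$), pulls $u_j^{\beta_i}$ outside the $x$-integral. A comparison between $p_a(\lambda_i(t),\cdot)$ and $p_a(\lambda_j(s),\cdot)$ for $s\le t$ (using monotonicity of $\lambda$ and the polynomial decay of $p_a$) then bounds the result below by $c\,w_j(s)^{\beta_i}$, yielding $w_i(t)\ge c_i+c\int_0^t f_i(s)w_j(s)^{\beta_i}ds$ and hence $w_i'\ge c\,f_i\,w_j^{\beta_i}$. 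For the ODE step, one applies a weighted AM--GM (with weights $(\beta_2+1)/(\beta_1+\beta_2+2)$ and $(\beta_1+1)/(\beta_1+\beta_2+2)$, which is precisely what generates the exponent in (\ref{dfexpl})) to extract a single power function $\Psi=w_1^{p_1}w_2^{p_2}$ satisfying $\Psi'(t)\ge c\,F(t)\Psi(t)^\gamma$ with $\gamma>1$ thanks to $\beta_i\beta_j>1$. A superlinear Gronwall integration then forces $\Psi$ to blow up in finite time whenever $\int^\infty F=\infty$; since $w_i(t)\le\sup_x u_i(t,x)$, this transfers to $u_i$.

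The main obstacle is the integral estimate in Step~1. Because $\alpha_1$ may differ from $\alpha_2$, the clean semigroup identity $\int p(t,x-y)p(s,y)dy=p(s+t,x)$ is not available for the product $p_a\cdot p_i$ when $i\ne a$. The specific combination $G_j(t)^{\alpha_b/\alpha_j}+G_b(t)$ in $\lambda_i$ is the key technical choice: the exponent $\alpha_b/\alpha_j$ converts $\alpha_j$-diffusion scales into the reference $\alpha_b$-scale, so that the on-diagonal value of $p_a$ at the single scale $\lambda_i$ dominates both kernels simultaneously. Identifying this scale and verifying the matching lower bound is the technical heart of the argument; the weighted AM--GM step that produces $F(t)$ is then routine.
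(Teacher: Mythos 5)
Your overall architecture --- average the solution against a rescaled stable kernel, use Jensen's inequality to obtain a system of integral inequalities $w_i(t)\ge c+c\int f_i(s)w_j(s)^{\beta_i}ds$, and then blow up the resulting ODE system via a weighted AM--GM applied to the product of the two components followed by a superlinear Gronwall integration --- is exactly the paper's route (Proposition \ref{explocombi} together with the proof of Theorem \ref{TeoPr}); the obstacle you isolate, that no semigroup identity is available between kernels of different indices, is also the one the paper confronts. But your resolution of it does not work. You take the averaging kernel to be $p_a$, the density with the \emph{smaller} index $\alpha_a=\min\{\alpha_1,\alpha_2\}$, and your ``crucial technical input'' $\int_{\mathbb{R}^d}p_a(\lambda_i(t),x-x_0)p_i(G_i(s,t),y-x)\,dx\ge c\,\lambda_i(t)^{-d/\alpha_b}$ is false for large $t$ whenever $\alpha_a<\alpha_b$: the left-hand side is a convolution of two probability densities, hence bounded above by $\sup_z p_a(\lambda_i(t),z)=p_a(1,0)\,\lambda_i(t)^{-d/\alpha_a}$, and since $d/\alpha_a>d/\alpha_b$ this upper bound is $o\bigl(\lambda_i(t)^{-d/\alpha_b}\bigr)$ as $\lambda_i(t)\to\infty$. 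The claimed lower bound therefore exceeds a trivial upper bound.

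The underlying reason is that the only comparison between the two stable densities goes one way: the heavy-tailed kernel dominates the light-tailed one at matched scales, $p_i(t,x)\ge c_ip_b(t^{\alpha_b/\alpha_i},x)$ (Lemma \ref{pden}(iv), i.e.\ Lemma 2.4 of \cite{M-V}); there is no reverse inequality, since for instance a Gaussian cannot dominate a Cauchy tail by any constant. Hence the common reference kernel must be $p_b$, the one with the \emph{larger} index: one minorizes every $p_i$ in (\ref{ecintegrl}) by $c_ip_b(\,\cdot^{\alpha_b/\alpha_i},\cdot)$, averages against $p_b(G_b(t),x)$, and then the genuine semigroup identity for $p_b$ together with the monotonicity $p_b(t,x)\ge(s/t)^{d/\alpha_b}p_b(s,x)$ produces exactly the weight $f_i$ of (\ref{dfi}) with its $d/\alpha_b$ exponents. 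This correction also repairs two secondary points of your plan: the initial term is bounded below for \emph{all} $x$ via $p_b(t,(x-y)/\tau)\ge p_b(t,x)p_b(t,y)$ as in Lemma \ref{cotini}, rather than by restricting $y$ to a compact neighbourhood of a single $x_0$ (a restriction under which you could not recover $w_j(s)$ from the inner integral); and the normalization attached to $w_i$ should involve $G_i^{\alpha_b/\alpha_i}+G_b$, the combination $G_j^{\alpha_b/\alpha_j}+G_b$ entering $f_i$ only through the $\beta_i$-th power of the normalization of $w_j$.
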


It is well known that a classical solution is a mild solution. Therefore, if
we give a sufficient condition for blow up of positive solutions to (\ref%
{ecintegrl}) then we have a condition for blow up of classical solutions to (%
\ref{wcs}).

\begin{corollary}
\label{SegRe}Moreover, assume that $\rho _{i}>0$, $\sigma _{i}>-1$ and 
\begin{eqnarray}
\lefteqn{\tfrac{d\rho _{b}}{\alpha _{b}}+\tfrac{\sigma _{i}(1+\beta
_{j})+\sigma _{j}(1+\beta _{i})}{\beta _{i}+\beta _{j}+2}+1\geq }  \notag \\
&&\tfrac{d}{\beta _{i}+\beta _{j}+2}\left[ \beta _{i}(\beta _{j}+1)\max
\left\{ \tfrac{\rho _{j}}{\alpha _{j}},\tfrac{\rho _{b}}{\alpha _{b}}%
\right\} +\beta _{j}(\beta _{i}+1)\max \left\{ \tfrac{\rho _{i}}{\alpha _{i}}%
,\tfrac{\rho _{b}}{\alpha _{b}}\right\} \right] ,  \label{conexpledi}
\end{eqnarray}%
then each (classical) solution to%
\begin{eqnarray}
\frac{\partial u_{i}\left( t,x\right) }{\partial t} &=&\rho _{i}t^{\rho
_{i}-1}\Delta _{\alpha _{i}}u_{i}\left( t,x\right) +t^{\sigma
_{i}}u_{j}^{\beta _{i}}\left( t,x\right) ,\text{\ \ }t>0,\text{ }x\in 
\mathbb{R}^{d},  \label{Edparpart} \\
u_{i}\left( 0,x\right) &=&\varphi _{i}\left( x\right) ,\text{\ \ }x\in 
\mathbb{R}^{d}.  \notag
\end{eqnarray}%
blow up in finite time.
\end{corollary}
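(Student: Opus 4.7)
The strategy is to verify that the specific choices $g_i(t)=\rho_i t^{\rho_i-1}$ and $h_i(t)=t^{\sigma_i}$ fit the hypotheses of Theorem \ref{TeoPr}, then to compute the large-$t$ asymptotics of $F$ from (\ref{dfexpl}) explicitly, and so reduce the divergence condition (\ref{cenfexplo}) to the algebraic inequality (\ref{conexpledi}). Since the remark just before the corollary converts blow up of mild solutions into blow up of classical ones, this will close the argument. The proviso $\beta_i\beta_j>1$ is inherited from Theorem \ref{TeoPr}.

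Direct integration gives $G_i(t)=t^{\rho_i}$, so $\rho_i>0$ yields (\ref{condG}), while $\sigma_i>-1$ keeps $h_i$ locally integrable at the origin, so that the mild formulation (\ref{ecintegrl}) is well posed on finite time intervals. Substituting $G_i(t)=t^{\rho_i}$ into (\ref{dfi}) gives
\[
f_i(t)=t^{\sigma_i}\left(\frac{t^{\rho_b}}{\bigl(t^{\rho_j\alpha_b/\alpha_j}+t^{\rho_b}\bigr)^{\beta_i}}\right)^{d/\alpha_b}.
\]
For large $t$ the sum in the denominator is of order $t^{\alpha_b\max\{\rho_j/\alpha_j,\rho_b/\alpha_b\}}$, so $f_i(t)\asymp t^{\gamma_i}$ with
\[
\gamma_i:=\sigma_i+\tfrac{d\rho_b}{\alpha_b}-d\beta_i\max\!\left\{\tfrac{\rho_j}{\alpha_j},\tfrac{\rho_b}{\alpha_b}\right\},
\]
and $\gamma_j$ is obtained by swapping $i$ and $j$ while keeping $b$ fixed.

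Formula (\ref{dfexpl}) then gives $F(t)\asymp t^\mu$ with
\[
\mu=\frac{(\beta_j+1)\gamma_i+(\beta_i+1)\gamma_j}{\beta_i+\beta_j+2}.
\]
The tail integral $\int^\infty t^\mu\,dt$ diverges precisely when $\mu\geq-1$, and expanding $\mu$ using the formulas for $\gamma_i,\gamma_j$ shows term-by-term that this inequality is exactly (\ref{conexpledi}). The only delicate point is this last algebraic rearrangement: one must track the fixed index $b$ (the one with the larger $\alpha$) throughout, and match the two different maxima $\max\{\rho_j/\alpha_j,\rho_b/\alpha_b\}$ and $\max\{\rho_i/\alpha_i,\rho_b/\alpha_b\}$ appearing in $\gamma_i$ and $\gamma_j$ to their correct positions on the right-hand side of (\ref{conexpledi}). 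Multiplicative constants hidden by $\asymp$ are irrelevant since only divergence of the tail matters.
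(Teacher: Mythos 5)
Your proposal is correct and follows essentially the same route as the paper: substitute $G_i(t)=t^{\rho_i}$ into (\ref{dfi}), extract the power-law behaviour of $F$ (the paper does this via the elementary bound $(t^{\theta_2}+t^{\theta_3})^{\theta_4}\leq(2t^{\max\{\theta_2,\theta_3\}})^{\theta_4}$ for $t>1$, which is your $\asymp$ step made explicit), and observe that the tail integral diverges exactly when the resulting exponent is $\geq-1$, which is (\ref{conexpledi}). Your bookkeeping of $\gamma_i$, $\gamma_j$ and the fixed index $b$ matches the paper's $\theta_1,\dots,\theta_6$ computation.
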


In applied mathematics it is well known the importance of the study of
equations like (\ref{wcs}). In fact, for example, they arise in fields like
molecular biology, hydrodynamics and statistical physics \cite{S-Z}. Also,
notice that generators of the form $g_{i}\left( t\right) \Delta _{\alpha
_{i}}$ arise in models of anomalous growth of certain fractal interfaces 
\cite{M-W}.

There are many related works. Here are some of them:

\begin{itemize}
\item When $\alpha _{1}=\alpha _{2}=2,$ $\rho _{1}=\rho _{2}=1,$ $\sigma
_{1}=\sigma _{2}=0$ and $\varphi _{1}=\varphi _{2}$ in (\ref{Edparpart}),
Fujita \cite{Fujita} showed that if $d<\alpha _{1}/\beta _{1}$, then for any
non-vanishing initial condition the solution of (\ref{Edparpart}) is
infinite for all $t$ large enough.

\item When $\alpha _{1}=\alpha _{2},$ $\rho _{1}=\rho _{2},$ $\sigma
_{1}=\sigma _{2}$ and $\varphi _{1}=\varphi _{2}$ in (\ref{Edparpart}), P%
\'{e}rez and Villa \cite{P-V} showed that if $\sigma _{1}+1\geq d\rho
_{1}(\beta _{1}-1)/\alpha _{1},$ then the solutions of (\ref{Edparpart})
blow up in finite time.

\item When $\alpha _{1}=\alpha _{2}=2$ and $\rho _{1}=\rho _{2}=1$ in (\ref%
{Edparpart}), Uda \cite{Uda} proved that all positive solutions of (\ref%
{Edparpart}) blow up if $\max \left\{ \frac{\left( \sigma _{2}+1\right)
\beta _{1}+\sigma _{1}+1}{\beta _{1}\beta _{2}-1},\frac{\left( \sigma
_{1}+1\right) \beta _{2}+\sigma _{2}+1}{\beta _{1}\beta _{2}-1}\right\} \geq 
\frac{d}{2}$.

\item When $\alpha _{1}=\alpha _{2}$, $g_{1}(t)=g_{2}(t)=t^{\rho -1}$, $\rho
>0$, and $h_{1}(t)=h_{2}(t)=1$ in (\ref{wcs}), P\'{e}rez \cite{P} proved
that every positive solution blows up in finite time if $\min \left\{ \frac{%
\alpha _{1}}{\rho \left( \beta _{1}-1\right) },\frac{\alpha _{1}}{\rho
\left( \beta _{2}-1\right) }\right\} >d$.

\item When $\rho _{1}=\rho _{2}=1$ and the nonlinear terms in (\ref%
{Edparpart}) are of the form $h\left( t,x\right) u^{\beta _{i}}$, $h\left(
t,x\right) =O\left( t^{\sigma }\left\vert x\right\vert ^{\gamma }\right) $,
Guedda and Kirane \cite{G-K-2} also studied blow up.
\end{itemize}

Other related results (when $\alpha _{1}=\alpha _{2}=2$) can be found, for
example in \cite{andre}, \cite{fila}, \cite{koby}, \cite{moch} and
references therein.

It is worth while to mention that Guedda and Kirane \cite{G-K-2} observed
that to reduce the study of blow up of (\ref{wcs}) to a system of ordinary
differential equations we must have a comparison result between $p_{i}\left(
t,x\right) $ and $p_{j}\left( t,x\right) $. Therefore, the goal of this
paper is to use the comparison result given in \cite{M-V} (Lemma 2.4) to
follows the usual approach, see among others \cite{Sug} or \cite{G-K-1}.

When $\alpha _{1}=\alpha _{2}=2$, $\rho _{1}=\rho _{2}=1$ and $\sigma
_{1}=\sigma _{2}=0$ the Uda condition (\ref{dUda}), the P\'{e}rez condition (%
\ref{dAro}) and the condition (\ref{conexpledi}) become%
\begin{eqnarray}
d &\leq &\frac{2(\max \{\beta _{1},\beta _{2}\}+1)}{\beta _{1}\beta _{2}-1}%
=C_{U},  \label{dUda} \\
d &<&\frac{2}{\max \{\beta _{1},\beta _{2}\}-1}=C_{A},  \label{dAro} \\
d &\leq &\frac{\beta _{1}+\beta _{2}+2}{\beta _{1}\beta _{2}-1}=C_{V},
\label{dVilla}
\end{eqnarray}%
respectively. Since $C_{A}\leq C_{V}\leq C_{U}$ we see that the Uda
condition (\ref{dUda}) is the best. Also, from this we see that $C_{V}$,
given in (\ref{dVilla}), is not the optimal bound (critical dimension), but
we believe that it is the best we can get by constructing a convenient
subsolution of the solution of (\ref{ecintegrl}). In fact, the condition (%
\ref{conexpledi}) coincides with the condition for blow up given by P\'{e}%
rez and Villa \cite{P-V}.

The paper is organized as follows. In Section 1 we prove the existence of
local solutions for the equation (\ref{ecintegrl}). In Section 2 we give
some preliminary results and discuses a sufficient condition for blow up of
a system of ordinary differential equations, finally in Section 3 we prove
the main result and its corollary.

\section{Local existence}

The existence of local solutions for the weakly coupled system (\ref%
{ecintegrl}) follows form the fix-point theorem of Banach. We begin
introducing some normed linear spaces. By $L^{\infty }\left( \mathbb{R}%
^{d}\right) $ we denote the space of all real-valued functions essentially
bounded defined on $\mathbb{R}^{d}$. Let $\tau >0$ be a real number that we
will fix later. Define%
\begin{equation*}
E_{\tau }=\left\{ \left( u_{1},u_{2}\right) :\left[ 0,\tau \right]
\rightarrow L^{\infty }\left( \mathbb{R}^{d}\right) \times L^{\infty }\left( 
\mathbb{R}^{d}\right) ,\text{ }|||\left( u_{1},u_{2}\right) |||<\infty
\right\} \text{,}
\end{equation*}%
where%
\begin{equation*}
|||\left( u_{1},u_{2}\right) |||=\sup_{0\leq t\leq \tau }\left\{ \left\Vert
u_{1}\left( t\right) \right\Vert _{\infty }+\left\Vert u_{2}\left( t\right)
\right\Vert _{\infty }\right\} .
\end{equation*}%
Then $E_{\tau }$ is a Banach space and the sets, $R>0$,
\begin{eqnarray*}
P_{\tau } &=&\left\{ \left( u_{1},u_{2}\right) \in E_{\tau }\text{, }%
u_{1}\geq 0,u_{2}\geq 0\right\} , \\
B_{\tau } &=&\left\{ \left( u_{1},u_{2}\right) \in E_{\tau }\text{, }%
|||\left( u_{1},u_{2}\right) |||\leq R\right\} \text{,}
\end{eqnarray*}%
are closed subspaces of $E_{\tau }$.

\begin{theorem}
\label{exisloc}There exists a $\tau =\tau \left( \varphi _{1},\varphi
_{2}\right) >0$ such that the integral system (\ref{ecintegrl}) has a local
solution in $B_{\tau }\cap P_{\tau }$.
\end{theorem}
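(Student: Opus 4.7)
The plan is to apply Banach's fixed-point theorem to the operator $T=(T_{1},T_{2})$ defined by the right-hand side of (\ref{ecintegrl}), restricted to the closed (hence complete) set $B_{\tau }\cap P_{\tau }$ inside the Banach space $E_{\tau }$. The task reduces to choosing a radius $R>0$ and a time horizon $\tau >0$, both depending on $\varphi _{1},\varphi _{2}$, such that $T$ leaves $B_{\tau }\cap P_{\tau }$ invariant and acts as a strict contraction there. Throughout, the crucial analytic fact is that for $0<\alpha _{i}\leq 2$ the fundamental solution $p_{i}(t,\cdot )$ is a probability density on $\mathbb{R}^{d}$, so convolution with $p_{i}$ preserves non-negativity and does not increase the $L^{\infty }$-norm.

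For the invariance step, non-negativity of $\varphi _{i}$, $h_{i}$ and of $u_{j}$ for $(u_{1},u_{2})\in P_{\tau }$ makes the integrand in (\ref{ecintegrl}) non-negative, so $T_{i}(u_{1},u_{2})\geq 0$. The norm bound on $(u_{1},u_{2})\in B_{\tau }\cap P_{\tau }$ and the contractivity of convolution with $p_{i}$ yield
\begin{equation*}
\|T_{i}(u_{1},u_{2})(t)\|_{\infty }\leq \|\varphi _{i}\|_{\infty }+R^{\beta _{i}}\int_{0}^{t}h_{i}(s)\,ds,\qquad t\in [0,\tau ].
\end{equation*}
Fixing $R:=2(\|\varphi _{1}\|_{\infty }+\|\varphi _{2}\|_{\infty })+1$ and requiring $\sum _{i=1}^{2}R^{\beta _{i}}\int _{0}^{\tau }h_{i}(s)\,ds\leq R/2$ then gives $|||T(u_{1},u_{2})|||\leq R$, so $T$ maps $B_{\tau }\cap P_{\tau }$ into itself.

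For the contraction step I would use the mean-value estimate $|a^{\beta _{i}}-b^{\beta _{i}}|\leq \beta _{i}R^{\beta _{i}-1}|a-b|$, valid on $[0,R]$ because $\beta _{i}\geq 1$. Applied pointwise to $u_{j}$ and $v_{j}$ inside (\ref{ecintegrl}) and combined again with the contractivity of $p_{i}\ast \cdot $, this produces
\begin{equation*}
\|T_{i}(u_{1},u_{2})(t)-T_{i}(v_{1},v_{2})(t)\|_{\infty }\leq \beta _{i}R^{\beta _{i}-1}\Bigl(\int _{0}^{\tau }h_{i}(s)\,ds\Bigr)\,|||(u_{1}-v_{1},u_{2}-v_{2})|||,
\end{equation*}
and summing over $i=1,2$ gives $|||T(u)-T(v)|||\leq K(\tau )\,|||u-v|||$ with $K(\tau )\to 0$ as $\tau \to 0^{+}$. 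Shrinking $\tau $ further so that simultaneously $K(\tau )\leq 1/2$ and the invariance inequality hold turns $T$ into a strict contraction on the complete set $B_{\tau }\cap P_{\tau }$, and Banach's theorem supplies the fixed point, i.e.\ the mild solution.

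The only point requiring care is the finiteness and $\tau $-vanishing of $\int _{0}^{\tau }h_{i}(s)\,ds$: the hypothesis only asserts continuity of $h_{i}$ on $(0,\infty )$, so local integrability at the origin is tacit (it holds for the model case $h_{i}(t)=t^{\sigma _{i}}$ with $\sigma _{i}>-1$ of Corollary \ref{SegRe}). Beyond that, the main obstacle is essentially bookkeeping---choosing the pair $(R,\tau )$ so that the ball-invariance inequality and the contraction-factor inequality are satisfied at once---rather than any substantive analytic difficulty.
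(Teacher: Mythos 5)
Your proposal is correct and follows essentially the same route as the paper: Banach's fixed-point theorem on $B_{\tau}\cap P_{\tau}$, the elementary estimate $|w^{p}-z^{p}|\leq p(w\vee z)^{p-1}|w-z|$ for the nonlinearity, and shrinking $\tau$ so that $\int_{0}^{\tau}h_{i}(s)\,ds$ makes the Lipschitz constant small. If anything, you are more explicit than the paper about the ball-invariance step (the choice of $R$) and about the tacit local integrability of $h_{i}$ at the origin, both of which the paper passes over quickly.
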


\begin{proof}
Define the operator $\Psi :B_{\tau }\cap P_{\tau }\rightarrow B_{\tau }\cap
P_{\tau }$, by%
\begin{align*}
\lefteqn{\Psi \left( u_{1},u_{2}\right) \left( t,x\right) } \\
& =\left( \int\nolimits_{\mathbb{R}^{d}}p_{1}\left( G_{1}\left( t\right)
,y-x\right) \varphi _{1}\left( y\right) dy,\int\nolimits_{\mathbb{R}%
^{d}}p_{2}\left( G_{2}\left( t\right) ,y-x\right) \varphi _{2}\left(
y\right) dy\right)  \\
& +\left( \int\nolimits_{0}^{t}\int\nolimits_{\mathbb{R}^{d}}p_{1}\left(
G_{1}\left( s,t\right) ,y-x\right) h_{1}\left( s\right) u_{2}^{\beta
_{1}}\left( s,y\right) dyds,\right.  \\
& \left. \int\nolimits_{0}^{t}\int\nolimits_{\mathbb{R}^{d}}p_{2}\left(
G_{2}\left( s,t\right) ,y-x\right) h_{2}\left( s\right) u_{1}^{\beta
_{2}}\left( s,y\right) dyds\right) .
\end{align*}
We choose $R$ sufficiently large such that $\Psi$ is onto $B_{\tau }\cap
P_{\tau }$. We are going to show that $\Psi $ is a contraction, therefore $\Psi $ has a
fix point. Let $\left( u_{1},u_{2}\right) ,\left( \tilde{u}_{1},\tilde{u}%
_{2}\right) \in B_{\tau }\cap P_{\tau }$ with $u_{i}(0)=\widetilde{u}_{i}(0)$%
,%
\begin{eqnarray*}
&&|||\Psi \left( u_{1},u_{2}\right) -\Psi \left( \tilde{u}_{1},\tilde{u}%
_{2}\right) ||| \\
&=&|||\left( \int\nolimits_{0}^{t}\int\nolimits_{\mathbb{R}^{d}}p_{1}\left(
G_{1}\left( s,t\right) ,y-x\right) h_{1}\left( s\right) \left[ u_{2}^{\beta
_{1}}\left( s,y\right) -\tilde{u}_{2}^{\beta _{1}}\left( s,y\right) \right]
dyds,\right.  \\
&&\left. \int\nolimits_{0}^{t}\int\nolimits_{\mathbb{R}^{d}}p_{2}\left(
G_{2}\left( s,t\right) ,y-x\right) h_{2}\left( s\right) \left[ u_{1}^{\beta
_{2}}\left( s,y\right) -\tilde{u}_{1}^{\beta _{2}}\left( s,y\right) \right]
dyds\right) ||| \\
&\leq &\sum_{i=1}^{2}\sup_{t\in \left[ 0,\tau \right] }\int%
\nolimits_{0}^{t}\int\nolimits_{\mathbb{R}^{d}}p_{i}\left( G_{i}\left(
s,t\right) ,y-x\right) h_{i}\left( s\right) \left\Vert u_{j}^{\beta
_{i}}\left( s\right) -\tilde{u}_{j}^{\beta _{i}}\left( s\right) \right\Vert
_{\infty }dyds.
\end{eqnarray*}%
Let $w,z>0$ and $p\geq 1$, then%
\begin{equation*}
\left\vert w^{p}-z^{p}\right\vert \leq p\left( w\vee z\right)
^{p-1}\left\vert w-z\right\vert .
\end{equation*}%
Using the previous elementary inequality we get
\begin{eqnarray*}
\left\vert u_{j}^{\beta _{i}}\left( s,x\right) -\tilde{u}_{j}^{\beta
_{i}}\left( s,x\right) \right\vert  &\leq &\beta _{i}\left( u_{j}\left(
s,x\right) \vee \tilde{u}_{j}\left( s,x\right) \right) ^{\beta
_{i}-1}\left\vert u_{j}\left( s,x\right) -\tilde{u}_{j}\left( s,x\right)
\right\vert  \\
&\leq &\beta _{i}R^{\beta _{i}-1}\left\Vert u_{j}-\tilde{u}_{j}\right\Vert _{\infty }\text{,}
\end{eqnarray*}%
from this we deduce%
\begin{eqnarray*}
|||\Psi \left( u_{1},u_{2}\right) -\Psi \left( \tilde{u}_{1},\tilde{u}%
_{2}\right) ||| &\leq &\sum_{i=1}^{2}\sup_{t\in \left[ 0,\tau \right]
}\int\nolimits_{0}^{t}h_{i}\left( s\right) \beta _{i}R^{\beta _{i}-1}\left\Vert u_{i}(s)-%
\tilde{u}_{i}\left( s\right) \right\Vert _{\infty }ds \\
&\leq &(\sum_{i=1}^{2}\beta _{i}R^{\beta _{i}-1}\int\nolimits_{0}^{\tau }h_{i}\left(
s\right) ds)|||\left( u_{1},u_{2}\right) -\left( \tilde{u}_{1},\tilde{u}%
_{2}\right) |||.
\end{eqnarray*}%
Since $\lim_{t\rightarrow 0}\int\nolimits_{0}^{t}h_{i}\left( s\right) ds=0$%
, we can choose $\tau >0$ small enough such that $\Psi $ is a
contraction.\hfill 
\end{proof}

\section{Preliminary results}

We begin with:

\begin{lemma}
\label{pden}For any $s,t>0$ and any $x,y\in \mathbb{R}^{d},$ we have\newline
(i) $p_{i}\left( ts,x\right) =t^{-d/\alpha_{i} }p_{i}\left( s,t^{-1/\alpha_{i}}x\right) .$\newline
(ii) $p_{i}\left( t,x\right) \geq \left( \frac{s}{t}\right) ^{d/\alpha_{i}}p_{i}\left( s,x\right) $, for $t\geq s.$\newline
(iii) $p_{i}\left( t,\frac{1}{\tau }\left( x-y\right) \right) \geq
p_{i}\left( t,x\right) p_{i}\left( t,y\right) ,$ if $p_{i}\left( t,0\right)
\leq 1$ and $\tau \geq 2$.\newline
(iv) There exist constants $c_{i}\in \left( 0,1\right] $ such that%
\begin{equation}
p_{i}\left( t,x\right) \geq c_{i}p_{b}(t^{\alpha _{b}/\alpha _{i}},x),
\label{reldensi}
\end{equation}%
where $b$ is as in (\ref{defa}).
\end{lemma}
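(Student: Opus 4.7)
The plan is to verify each of the four assertions directly from the Fourier representation of the symmetric $\alpha$-stable density and, for (iv), to invoke the comparison lemma already cited from \cite{M-V}. Throughout I will use that
\[
p_i(t,x)=(2\pi)^{-d}\int_{\mathbb{R}^d} e^{-i\xi\cdot x}\,e^{-t|\xi|^{\alpha_i}}\,d\xi,
\]
which is a strictly positive, radially symmetric, radially nonincreasing function of $x$ attaining its maximum at the origin. This monotonicity is what drives (ii) and (iii).

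For (i), I would make the change of variable $\xi=t^{-1/\alpha_i}\eta$ in the Fourier integral for $p_i(ts,x)$; the Jacobian gives the factor $t^{-d/\alpha_i}$ and the remaining exponent becomes $-s|\eta|^{\alpha_i}-i\eta\cdot(t^{-1/\alpha_i}x)$, i.e. exactly that of $p_i(s,t^{-1/\alpha_i}x)$. For (ii), rewriting (i) yields $p_i(t,x)=(s/t)^{d/\alpha_i}\,p_i(s,(s/t)^{1/\alpha_i}x)$; since $t\ge s$, the vector $(s/t)^{1/\alpha_i}x$ has norm $\le |x|$, and radial monotonicity of $p_i(s,\cdot)$ gives $p_i(s,(s/t)^{1/\alpha_i}x)\ge p_i(s,x)$, establishing the inequality.

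For (iii), the key elementary observation is that $\tau\ge 2$ together with the triangle inequality gives $|(x-y)/\tau|\le (|x|+|y|)/\tau\le 2\max(|x|,|y|)/\tau\le \max(|x|,|y|)$. Assuming without loss of generality $|y|\ge |x|$, radial monotonicity delivers $p_i(t,(x-y)/\tau)\ge p_i(t,y)$, while the hypothesis $p_i(t,0)\le 1$ gives $p_i(t,x)\le 1$. Multiplying yields $p_i(t,x)\,p_i(t,y)\le p_i(t,y)\le p_i(t,(x-y)/\tau)$.

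For (iv), when $i=b$ the inequality is trivial with $c_b=1$, so the content is the case $i=a$, where $\alpha_a\le\alpha_b$. This is precisely Lemma~2.4 of \cite{M-V}, which I would invoke directly; morally, the scaling (i) reduces the claim to finding $c_a>0$ with $p_a(1,y)\ge c_a\,p_b(1,y)$ for every $y\in\mathbb{R}^d$, and this holds because $p_a(1,\cdot)$ has slower tail decay than $p_b(1,\cdot)$ while both densities are continuous and strictly positive at the origin, so the ratio $p_a(1,y)/p_b(1,y)$ is bounded below. The only nontrivial obstacle is (iv), and since the author defers it to \cite{M-V} I do so as well; parts (i)--(iii) are short consequences of scaling and radial monotonicity.
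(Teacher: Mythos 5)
Your proposal is correct and matches the paper, which proves this lemma purely by citation (parts (i)--(iii) to Section 2 of \cite{Sug}, part (iv) to Lemma 2.4 of \cite{M-V}); you defer (iv) to the same reference and supply for (i)--(iii) exactly the standard scaling and radial-monotonicity arguments that the cited source contains. The only point you assert without proof is the radial unimodality of the symmetric stable density, which is the classical fact underlying (ii) and (iii) and is implicitly relied upon by the paper as well.
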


\begin{proof}
For (i)-(iii) see Section 2 in \cite{Sug} and for (iv) see Lemma 2.4 in \cite%
{M-V}.\hfill
\end{proof}

\begin{lemma}
\label{cotini}Let $u_{i}$ be a positive solution of (\ref{ecintegrl}), then%
\begin{equation}
u_{i}\left( t_{0},x\right) \geq c_{i}(t_{0})p_{b}\left( 2^{-\alpha
_{b}}G_{i}\left( t_{0}\right) ^{\alpha _{b}/\alpha _{i}},x\right) ,\text{ \ }%
\forall x\in \mathbb{R}^{d},  \label{estuini}
\end{equation}%
where%
\begin{equation*}
c_{i}(t_{0})=c_{i}2^{-d}\int\nolimits_{\mathbb{R}^{d}}p_{b}\left(
G_{i}\left( t_{0}\right) ^{\alpha _{b}/\alpha _{i}},2y\right) \varphi
_{i}(y)dy
\end{equation*}%
and $t_{0}>1$ is large enough such that%
\begin{equation}
p_{b}\left( G_{i}\left( t_{0}\right) ^{\alpha _{b}/\alpha _{i}},0\right)
\leq 1.  \label{tre}
\end{equation}
\end{lemma}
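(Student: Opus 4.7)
The plan is to drop the nonlinear (non-negative) forcing term in the integral equation \eqref{ecintegrl} and handle only the semigroup action on the initial datum, then use the three scaling/factorization properties from Lemma \ref{pden} in sequence: part (iv) to downgrade the density $p_i$ to $p_b$ at a rescaled time, part (iii) to split the kernel $p_b(\cdot, y-x)$ into a product of an $x$-factor and a $y$-factor, and part (i) to identify the resulting $x$-factor with $p_b(2^{-\alpha_b}G_i(t_0)^{\alpha_b/\alpha_i},x)$.

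More concretely, since $h_i,u_j\geq 0$, the integral representation \eqref{ecintegrl} gives
$$u_i(t_0,x)\;\geq\;\int_{\mathbb R^d} p_i\bigl(G_i(t_0),y-x\bigr)\varphi_i(y)\,dy,$$
and applying Lemma \ref{pden}(iv) to the integrand (with $s:=G_i(t_0)^{\alpha_b/\alpha_i}$) yields
$$u_i(t_0,x)\;\geq\;c_i\int_{\mathbb R^d} p_b\bigl(s,y-x\bigr)\varphi_i(y)\,dy.$$
This is the only place where the comparison between fractional densities from \cite{M-V} enters.

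Next I would use Lemma \ref{pden}(iii) with $\tau=2$, applied at the points $2x$ and $2y$ in place of $x$ and $y$. Since $p_b$ is spatially symmetric, this gives
$$p_b(s,y-x)\;=\;p_b\bigl(s,(2x-2y)/2\bigr)\;\geq\;p_b(s,2x)\,p_b(s,2y),$$
and the hypothesis $p_b(s,0)\leq 1$ needed for (iii) is exactly the assumption \eqref{tre} imposed on $t_0$. Inserting this lower bound and pulling the $x$-factor outside the integral,
$$u_i(t_0,x)\;\geq\;c_i\,p_b(s,2x)\int_{\mathbb R^d} p_b(s,2y)\varphi_i(y)\,dy.$$

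Finally, Lemma \ref{pden}(i) with $t=2^{-\alpha_b}$ gives the scaling identity $p_b(s,2x)=2^{-d}p_b(2^{-\alpha_b}s,x)$, which turns the above display into exactly
$$u_i(t_0,x)\;\geq\;c_i\,2^{-d}\!\int p_b(s,2y)\varphi_i(y)\,dy\cdot p_b\!\bigl(2^{-\alpha_b}G_i(t_0)^{\alpha_b/\alpha_i},x\bigr)\;=\;c_i(t_0)\,p_b\!\bigl(2^{-\alpha_b}G_i(t_0)^{\alpha_b/\alpha_i},x\bigr),$$
which is \eqref{estuini}. There is no real obstacle here; the argument is a bookkeeping exercise in the three properties of Lemma \ref{pden}. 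The only point that requires a bit of care is to apply (iii) at the shifted points $2x,2y$ (so that the factor-of-two in the argument $(\cdot-\cdot)/2$ is absorbed correctly) and at the time $s$ at which the pointwise bound $p_b(s,0)\leq 1$ is available, since the analogous bound is not free at the rescaled time $2^{-\alpha_b}s$.
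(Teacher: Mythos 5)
Your argument is correct and follows essentially the same route as the paper: drop the non-negative nonlinear term, apply Lemma \ref{pden}(iv) to pass from $p_i$ to $p_b$ at time $G_i(t_0)^{\alpha_b/\alpha_i}$, use (iii) with $\tau=2$ at the points $2x,2y$ (licensed by \eqref{tre}), and use the scaling (i) to rewrite $p_b(s,2x)=2^{-d}p_b(2^{-\alpha_b}s,x)$, which produces exactly the constant $c_i(t_0)$. The only difference is cosmetic ordering (the paper applies (iv) last, you apply it first), which changes nothing.
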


\begin{proof}
By (i) of Lemma \ref{pden} and (\ref{condG}) there exist $t_{0}$ large
enough such that 
\begin{equation}
p_{b}\left( G_{i}\left( t_{0}\right) ^{\alpha _{b}/\alpha _{i}},0\right)
=G_{i}\left( t_{0}\right) ^{-d/\alpha _{i}}p_{b}\left( 1,0\right) \leq 1.
\label{cpsm1}
\end{equation}%
Using (iii) and (i) of Lemma \ref{pden}, we get%
\begin{eqnarray*}
p_{b}\left( G_{i}\left( t_{0}\right) ^{\alpha _{b}/\alpha _{i}},y-x\right) 
&\geq &p_{b}\left( G_{i}\left( t_{0}\right) ^{\alpha _{b}/\alpha
_{i}},2x\right) p_{b}\left( G_{i}\left( t_{0}\right) ^{\alpha _{b}/\alpha
_{i}},2y\right)  \\
&=&2^{-d}p_{b}\left( 2^{-\alpha _{b}}G_{i}\left( t_{0}\right) ^{\alpha
_{b}/\alpha _{i}},x\right) p_{b}\left( G_{i}\left( t_{0}\right) ^{\alpha
_{b}/\alpha _{i}},2y\right) \text{.}
\end{eqnarray*}%
From (\ref{ecintegrl}), (iv) of Lemma \ref{pden} and the previous inequality
we conclude%
\begin{equation*}
u_{i}(t_{0},x)\geq \left( c_{i}2^{-d}\int\nolimits_{\mathbb{R}%
^{d}}p_{b}\left( G_{i}\left( t_{0}\right) ^{\alpha _{b}/\alpha
_{i}},2y\right) \varphi _{i}(y)dy\right) p_{b}\left( 2^{-\alpha
_{b}}G_{i}\left( t_{0}\right) ^{\alpha _{b}/\alpha _{i}},x\right) .
\end{equation*}%
Getting the desired result.\hfill 
\end{proof}

Observe that the semigroup property implies 
\begin{eqnarray}
\lefteqn{u_{i}(t+t_{0},x)=\int\nolimits_{\mathbb{R}^{d}}p_{i}\left(
G_{i}\left( t_{0},t+t_{0}\right) ,y-x\right) u_{i}(t_{0},y)dy}  \notag \\
&&+\int\nolimits_{0}^{t}\int\nolimits_{\mathbb{R}^{d}}p_{i}\left(
G_{i}\left( s+t_{0},t+t_{0}\right) ,y-x\right) h_{i}\left( s+t_{0}\right)
u_{j}^{\beta _{i}}\left( s+t_{0},y\right) dyds.  \label{umtei}
\end{eqnarray}%
\hfill

Let 
\begin{equation}
\bar{u}_{i}\left( t\right) =\int\nolimits_{\mathbb{R}^{d}}p_{b}\left(
G_{b}(t),x\right) u_{i}\left( t,x\right) dx,\text{ \ }t\geq 0.  \label{dubar}
\end{equation}

\begin{lemma}
\label{caresp}If $\overline{u}_{i}$ blow up in finite time, then $u_{i}$
also does.
\end{lemma}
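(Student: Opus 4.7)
The plan is to argue by contraposition: I will show that if $u_{i}$ is a global solution, then $\overline{u}_{i}$ is automatically bounded on every bounded time interval, hence cannot blow up in finite time. The lemma is then immediate.

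The key observation is that for every $t\ge 0$, the function $x\mapsto p_{b}(G_{b}(t),x)$ is a probability density on $\mathbb{R}^{d}$. Indeed, $p_{b}$ is the fundamental solution of $\partial_{t}-\Delta_{\alpha_{b}}$, i.e.\ the transition density of a symmetric $\alpha_{b}$-stable process, so $\int_{\mathbb{R}^{d}}p_{b}(s,x)\,dx=1$ for every $s>0$ (and when $G_{b}(t)=0$, $p_{b}(0,\cdot)$ is interpreted as a Dirac mass at $0$, still a probability measure). Since $u_{i}\ge 0$, this yields the pointwise bound
\begin{equation*}
\overline{u}_{i}(t)=\int_{\mathbb{R}^{d}}p_{b}(G_{b}(t),x)\,u_{i}(t,x)\,dx\le \|u_{i}(t,\cdot)\|_{\infty},\qquad t\ge 0.
\end{equation*}

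Now suppose $u_{i}$ is global in the sense used in the introduction, so that $\|u_{i}\|_{L^{\infty}([0,T]\times\mathbb{R}^{d})}<\infty$ for every $T<\infty$. The displayed inequality then forces $\sup_{t\in[0,T]}\overline{u}_{i}(t)<\infty$ for every $T<\infty$, i.e.\ $\overline{u}_{i}$ does not blow up in finite time. Contrapositively, if $\overline{u}_{i}$ blows up in finite time then so does $u_{i}$.

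I do not anticipate any real obstacle: the whole proof reduces to the fact that averaging a nonnegative function against a probability density cannot exceed its $L^{\infty}$ norm. The only minor point to be careful about is the interpretation of $p_{b}(G_{b}(t),\cdot)$ at times where $G_{b}(t)$ may equal $0$, which is handled by the Dirac-mass interpretation above (or, equivalently, by restricting attention to $t>0$ large enough that $G_{b}(t)>0$, which suffices for the blow-up notion).
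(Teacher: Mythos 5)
Your proof is correct, but it is genuinely different from --- and considerably more elementary than --- the one in the paper. You argue by contraposition using only the normalization $\int_{\mathbb{R}^{d}}p_{b}(s,x)\,dx=1$ of the stable density, so that $\overline{u}_{i}(t)\leq \Vert u_{i}(t,\cdot )\Vert _{\infty }$ and local-in-time boundedness of $u_{i}$ forces local-in-time boundedness of $\overline{u}_{i}$; this is exactly the contrapositive of the stated lemma, and it suffices for the way the lemma is invoked in the proof of Theorem \ref{TeoPr}. The paper instead attacks the implication directly: starting from the explosion time $t_{j}$ of $\overline{u}_{j}$, it feeds the integral equation (\ref{umtei}) through the density comparisons (i), (iii), (iv) of Lemma \ref{pden} (choosing $t>t_{j}$ so large that the ratio $\tau _{i}\geq 2$) and Jensen's inequality to conclude that $u_{i}(t+t_{0},x)=\infty $ pointwise at a later finite time, and then bootstraps to show that \emph{both} components blow up. So the paper's argument buys strictly more (pointwise infinitude and blow up of both components, exploiting the coupling), while yours buys brevity and robustness: it needs no comparison between $p_{i}$ and $p_{b}$ and no hypothesis (\ref{condG}). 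One small point of hygiene: the hypothesis you should negate is ``$u_{i}$ does not blow up in finite time'' in the sense of the introduction (boundedness on every $[0,T]\times \mathbb{R}^{d}$), rather than ``global'' in the sense of being defined on all of $[0,\infty )\times \mathbb{R}^{d}$; your parenthetical $\Vert u_{i}\Vert _{L^{\infty }([0,T]\times \mathbb{R}^{d})}<\infty $ for every $T$ is the right condition, so the argument stands as written.
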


\begin{proof}
Let $t_{0}$ be given in Lemma \ref{pden}. Take $t_{0}<t_{j}<\infty $ the
explosion time of $\overline{u}_{j}$. From (\ref{condG}) we can choose $%
t>t_{j}$ large enough such that 
\begin{equation*}
G_{i}\left( t_{j}+t_{0},t+t_{0}\right) >2^{\alpha _{i}}G_{b}\left(
t_{j}+t_{0}\right) ^{\alpha _{i}/\alpha _{b}}.
\end{equation*}%
Thus, for each $0\leq s\leq t_{j},$%
\begin{eqnarray*}
\int_{s+t_{0}}^{t+t_{0}}g_{i}\left( r\right) dr &\geq
&\int_{t_{j}+t_{0}}^{t+t_{0}}g_{i}\left( r\right) dr \\
&>&2^{\alpha _{i}}\left( \int_{0}^{t_{j}+t_{0}}g_{b}\left( r\right)
dr\right) ^{\alpha _{i}/\alpha _{b}}\geq 2^{\alpha _{i}}\left(
\int_{0}^{s+t_{0}}g_{b}\left( r\right) dr\right) ^{\alpha _{i}/\alpha _{b}},
\end{eqnarray*}%
hence$\ $%
\begin{equation*}
\tau _{i}=\frac{G_{i}\left( s+t_{0},t+t_{0}\right) ^{1/\alpha _{i}}}{%
G_{b}\left( s+t_{0}\right) ^{1/\alpha _{b}}}\geq 2\text{.}
\end{equation*}%
On the other hand, (\ref{cpsm1}) implies%
\begin{equation*}
p_{b}\left( G_{b}\left( s+t_{0}\right) ,0\right) \leq p_{b}\left(
G_{b}\left( t_{0}\right) ,0\right) =G_{b}\left( t_{0}\right) ^{-d/\alpha
_{b}}p_{b}\left( 1,0\right) \leq 1.
\end{equation*}%
Using (i) and (iii) of Lemma \ref{pden} we get 
\begin{eqnarray*}
p_{b}\left( G_{i}\left( s+t_{0},t+t_{0}\right) ^{\alpha _{b}/\alpha
_{i}},y-x\right)  &=&\tau _{i}^{-d}p_{b}\left( G_{b}\left( s+t_{0}\right) ,%
\tfrac{1}{\tau _{i}}(y-x)\right)  \\
&\geq &\tau _{i}^{-d}p_{b}\left( G_{b}\left( s+t_{0}\right) ,x\right)
p_{b}\left( G_{b}\left( s+t_{0}\right) ,y\right) .
\end{eqnarray*}%
From (\ref{umtei}), (iv) of Lemma \ref{pden} and Jensen's inequality we
deduce that%
\begin{eqnarray*}
u_{i}(t+t_{0},x) &\geq &c_{i}\int\nolimits_{0}^{t_{j}}h_{i}\left(
s+t_{0}\right)  \\
&&\times \int\nolimits_{\mathbb{R}^{d}}p_{b}\left(
G_{i}(s+t_{0},t+t_{0})^{\alpha _{b}/\alpha _{i}},y-x\right) u_{j}\left(
s+t_{0},y\right) ^{\beta _{i}}dyds \\
&\geq &c_{i}\int\nolimits_{0}^{t_{j}}\tau _{i}^{-d}h_{i}\left(
s+t_{0}\right) p_{b}\left( G_{b}\left( s+t_{0}\right) ,x\right) \overline{u}%
_{j}\left( s+t_{0}\right) ^{\beta _{i}}ds.
\end{eqnarray*}%
Then $u_{i}\left( t+t_{0},x\right) =\infty $. The definition (\ref{dubar})
of $\overline{u}_{i}$\ implies that $\overline{u}_{i}$ blows up in finite
time, and working as before we conclude that $u_{j}$ also blows up in finite
time.\hfill 
\end{proof}

In what follows by $c$ we mean a positive constant that may change from
place to place.

The following result is interesting in itself.

\begin{proposition}
\label{explocombi}Let $v_{i},f_{i}:[t_{0},\infty )\rightarrow \mathbb{R}$ be
continuous functions such that%
\begin{equation*}
v_{i}(t)\geq k+k\int\nolimits_{t_{0}}^{t}f_{i}(s)v_{j}\left( s\right)
^{\beta _{i}}ds,\ \ t\geq t_{0},
\end{equation*}%
where $k>0$ is a constant. Then $v_{i}$ blow up in finite time if 
\begin{equation*}
\int\nolimits_{t_{0}}^{\infty }\left( f_{i}(s)^{1/(\beta
_{i}+1)}f_{j}(s)^{1/(\beta _{j}+1)}\right) ^{\left( \beta _{i}+1\right)
(\beta _{j}+1)/(\beta _{i}+\beta _{j}+2)}ds=\infty .
\end{equation*}
\end{proposition}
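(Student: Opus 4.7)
First I would tighten the hypothesis. Define
\[
V_i(t) = k + k\int_{t_0}^{t} f_i(s)\, v_j(s)^{\beta_i}\,ds, \qquad i=1,2,
\]
so that $V_i \le v_i$, $V_i(t_0)=k$, and
\[
V_i'(t) = k f_i(t)\, v_j(t)^{\beta_i} \;\ge\; k f_i(t)\, V_j(t)^{\beta_i}.
\]
It is enough to show that $V_1$ and $V_2$ cannot both remain finite on all of $[t_0,\infty)$.

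The central ansatz is the product $W(t)=V_1(t)V_2(t)$. Differentiating and using the two coupled differential inequalities,
\[
W'(t) \;\ge\; k\bigl(f_1(t)\,V_2(t)^{\beta_1+1} + f_2(t)\,V_1(t)^{\beta_2+1}\bigr).
\]
I would then apply the weighted AM--GM inequality $X+Y \ge C_\lambda\, X^\lambda Y^{1-\lambda}$ with the specific weights
\[
\lambda = \frac{\beta_2+1}{\beta_1+\beta_2+2}, \qquad 1-\lambda = \frac{\beta_1+1}{\beta_1+\beta_2+2},
\]
chosen precisely so that in $(f_1V_2^{\beta_1+1})^\lambda (f_2V_1^{\beta_2+1})^{1-\lambda}$ the exponents of $V_1$ and of $V_2$ both collapse to the common value $\gamma := (\beta_1+1)(\beta_2+1)/(\beta_1+\beta_2+2)$. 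A direct computation then yields
\[
W'(t) \;\ge\; c\, F(t)\, W(t)^\gamma,
\]
where $F$ is exactly the function in \eqref{dfexpl} (the powers of $f_1,f_2$ match after a symmetry $(i,j)\mapsto(1,2)$). The assumption $\beta_1\beta_2>1$ is what gives $\gamma>1$.

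Finally, I would argue by contradiction. If $W$ were finite on all of $[t_0,\infty)$, dividing by $W^\gamma$ and integrating gives
\[
\frac{W(t_0)^{1-\gamma} - W(t)^{1-\gamma}}{\gamma-1} \;\ge\; c\int_{t_0}^{t} F(s)\,ds,
\]
whose left side is bounded above by $k^{2(1-\gamma)}/(\gamma-1)$ for every $t$, contradicting \eqref{cenfexplo}. Thus $W=V_1V_2$ must blow up at some finite $T$; since both $V_i\ge k>0$, at least one $V_i$, and hence the corresponding $v_i$, blows up by time $T$. A short feedback via the integral inequality (using $v_i\ge k\int_{t_0}^{t} f_i\,v_j^{\beta_i}$) propagates blow-up of one component to blow-up of the other, giving the statement.

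The main obstacle is purely algebraic: guessing the right combined quantity $W=V_1V_2$ and the right AM--GM weights so that both $V_i$-exponents coincide and the prefactor is exactly $F(t)$. Once these choices are made the rest of the proof is the classical Bernoulli comparison argument and is routine.
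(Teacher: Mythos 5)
Your proposal is correct and takes essentially the same route as the paper: bound the solution below by the right-hand side of the integral inequality, form the product of the two components, apply the two-term power inequality (the paper cites it from Qi--Levine, you derive it by weighted AM--GM) to obtain a scalar Bernoulli-type inequality $W'\ge c\,F\,W^{\gamma}$ with $\gamma=(\beta_1+1)(\beta_2+1)/(\beta_1+\beta_2+2)>1$, and integrate to force finite-time blow up when $\int F=\infty$. The only differences are cosmetic: you define $V_i$ directly as the right-hand side of the given inequality where the paper introduces an auxiliary system $z_i$ with initial value $k/2$ and a comparison argument, and you integrate $W'/W^{\gamma}$ directly where the paper works with $\log(z_iz_j)$ and an explicitly solved comparison ODE.
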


\begin{proof}
Consider the system%
\begin{equation}
z_{i}(t)=\frac{k}{2}+k\int\nolimits_{t_{0}}^{t}f_{i}(s)z_{j}\left( s\right)
^{\beta _{i}}ds,\ \ t\geq t_{0}.  \label{ecauxcomexplo}
\end{equation}%
Let $N_{i}=\{t>t_{0}:z_{i}(s)<v_{i}(s),\ \ \forall s\in \lbrack 0,t]\}$. It
is clear that $N_{i}\neq \varnothing $. Let $e_{i}=\sup N_{i}$. Without loss
of generality suppose that $e_{i}\geq e_{j}$. If $e_{i}<\infty $, then the
continuity of $v_{j}-z_{j}$, yields%
\begin{equation*}
0=(v_{j}-z_{j})(e_{j})\geq \frac{k}{2}+k\int\nolimits_{t_{0}}^{e_{j}}f_{j}(s)%
\left[ v_{i}\left( s\right) ^{\beta _{j}}-z_{i}\left( s\right) ^{\beta _{j}}%
\right] ds\geq \frac{k}{2}.
\end{equation*}%
Therefore $z_{i}(t)\leq v_{i}(t),$ for each $t\geq t_{0}$.

Define%
\begin{equation}
Z(t)=\log z_{i}(t)z_{j}(t)\text{, \ }t\geq t_{0}.  \label{dprosol}
\end{equation}%
Then, by (\ref{ecauxcomexplo})%
\begin{eqnarray*}
Z^{\prime }(t) &=&\frac{f_{i}(t)z_{j}(t)^{\beta _{i}}}{z_{i}(t)}+\frac{%
f_{i}(t)z_{i}(t)^{\beta _{j}}}{z_{j}(t)} \\
&=&\frac{\left( f_{i}(t)^{1/(\beta _{i}+1)}z_{j}(t)\right) ^{\beta
_{i}+1}+\left( f_{j}(t)^{1/(\beta _{j}+1)}z_{i}(t)\right) ^{\beta _{j}+1}}{%
z_{i}(t)z_{j}(t)}.
\end{eqnarray*}%
From Proposition 1 (p.259) of \cite{Q-H} we see that for each $x,y>0$,%
\begin{equation*}
y^{\beta _{i}+1}+x^{\beta _{j}+1}\geq c(xy)^{\left( \beta _{i}+1\right)
(\beta _{j}+1)/(\beta _{i}+\beta _{j}+2)}.
\end{equation*}%
Using this and (\ref{dprosol}) we obtain%
\begin{eqnarray*}
Z^{\prime }(t) &\geq &c\left( f_{i}(t)^{1/(\beta _{i}+1)}f_{j}(t)^{1/(\beta
_{j}+1)}\right) ^{\left( \beta _{i}+1\right) (\beta _{j}+1)/(\beta
_{i}+\beta _{j}+2)} \\
&&\times \left( z_{i}(t)z_{j}(t)\right) ^{\left( \beta _{i}\beta
_{i}-1\right) /(\beta _{i}+\beta _{j}+2)} \\
&=&cF(t)\exp \left( \tfrac{\beta _{i}\beta _{i}-1}{\beta _{i}+\beta _{j}+2}%
Z(t)\right) ,
\end{eqnarray*}%
where $F$ is like (\ref{dfexpl}). Consider the equation%
\begin{equation*}
H^{\prime }(t)=cF(t)\exp \left( cH(t)\right) ,\ \ t>t_{0},\ \ H(t_{0})=2\log 
\tfrac{k}{2}.
\end{equation*}%
whose solution is%
\begin{equation*}
H(t)=\log \left( e^{-cH(t_{0})}-c^{2}\int_{t_{0}}^{t}F(s)ds\right) ^{-1/c}.
\end{equation*}%
Since $H\leq Z$ then the result follows from (\ref{cenfexplo}).\hfill 
\end{proof}

\section{Blow up results}

We begin with the:

\begin{proof}[Proof of Theorem \protect\ref{TeoPr}]
From (\ref{umtei}) and (\ref{reldensi})%
\begin{eqnarray*}
\lefteqn{u_{i}(t+t_{0},x)\geq \int\nolimits_{\mathbb{R}^{d}}c_{i}p_{b}%
\left( G_{i}\left( t_{0},t+t_{0}\right) ^{\alpha _{b}/\alpha
_{i}},y-x\right) u_{i}(t_{0},y)dy} \\
&&+\int\nolimits_{0}^{t}h_{i}\left( s+t_{0}\right) \int\nolimits_{\mathbb{R}%
^{d}}c_{i}p_{b}\left( G_{i}\left( s+t_{0},t+t_{0}\right) ^{\alpha
_{b}/\alpha _{i}},y-x\right) u_{j}^{\beta _{i}}\left( s+t_{0},y\right) dyds.
\end{eqnarray*}%
Multiplying by $p_{b}\left( G_{b}\left( t+t_{0}\right) ,x\right) $ and
integrating with respect to $x$ we get%
\begin{eqnarray*}
\lefteqn{\bar{u}_{i}\left( t+t_{0}\right) \geq c_{i}\int\nolimits_{\mathbb{R%
}^{d}}p_{b}\left( G_{i}\left( t_{0},t+t_{0}\right) ^{\alpha _{b}/\alpha
_{i}}+G_{b}\left( t+t_{0}\right) ,y\right) u_{i}(t_{0},y)dy} \\
&&+c_{i}\int\nolimits_{0}^{t}h_{i}\left( s+t_{0}\right) \int\nolimits_{%
\mathbb{R}^{d}}p_{b}\left( G_{i}\left( s+t_{0},t+t_{0}\right) ^{\alpha
_{b}/\alpha _{i}}+G_{b}\left( t+t_{0}\right) ,y\right)  \\
&&\times u_{j}^{\beta _{i}}\left( s+t_{0},y\right) dyds.
\end{eqnarray*}%
The property (ii) of Lemma \ref{pden} and Jensen's inequality, rendering%
\begin{eqnarray*}
\lefteqn{\bar{u}_{i}\left( t+t_{0}\right) \geq c_{i}\int\nolimits_{\mathbb{R%
}^{d}}p_{b}\left( G_{i}(t_{0},t+t_{0})^{\alpha _{b}/\alpha _{i}}+G_{b}\left(
t+t_{0}\right) ,y\right) u_{i}(t_{0},y)dy} \\
&&+c_{i}\int\nolimits_{0}^{t}\left( \frac{G_{b}\left( s+t_{0}\right) }{%
G_{i}\left( s+t_{0},t+t_{0}\right) ^{\alpha _{b}/\alpha _{i}}+G_{b}\left(
t+t_{0}\right) }\right) ^{d/\alpha _{b}} \\
&&\times h_{i}\left( s+t_{0}\right) \left( \bar{u}_{j}\left( s+t_{0}\right)
\right) ^{\beta _{i}}ds.
\end{eqnarray*}%
Moreover, (\ref{estuini}) and that $G_{i}\left( s,\cdot \right) $ is
increasing implies%
\begin{eqnarray*}
\lefteqn{\bar{u}_{i}\left( t+t_{0}\right) \geq c_{i}c_{i}(t_{0})p_{b}\left(
1,0\right) \left( 2G_{i}(t+t_{0})^{\alpha _{b}/\alpha _{i}}+2G_{b}\left(
t+t_{0}\right) \right) ^{-d/\alpha _{b}}} \\
&&+c_{i}\int\nolimits_{0}^{t}h_{i}\left( s+t_{0}\right) \left( \frac{%
G_{b}\left( s+t_{0}\right) }{2G_{i}\left( t+t_{0}\right) ^{\alpha
_{b}/\alpha _{i}}+2G_{b}\left( t+t_{0}\right) }\right) ^{d/\alpha
_{b}}\left( \bar{u}_{j}\left( s+t_{0}\right) \right) ^{\beta _{i}}ds.
\end{eqnarray*}%
Let 
\begin{equation*}
v_{i}(t+t_{0})=\bar{u}_{i}\left( t+t_{0}\right) (G_{i}\left( t+t_{0}\right)
^{\alpha _{b}/\alpha _{i}}+G_{b}\left( t+t_{0}\right) )^{d/\alpha _{b}},
\end{equation*}%
then
\begin{equation*}
v_{i}(t+t_{0})\geq c+c\int\nolimits_{0}^{t}f_{i}(s+t_{0})v_{j}\left(
s+t_{0}\right) ^{\beta _{i}}ds,
\end{equation*}%
where $f_{i}$ is defined in (\ref{dfi}). The result follows from Proposition %
\ref{explocombi} and Lemma \ref{caresp}.\hfill 
\end{proof}

\bigskip

\begin{proof}[Proof of Corollary \protect\ref{SegRe}]
Let 
\begin{equation*}
f_{i}(t)=\frac{t^{\sigma _{i}+d\rho _{b}/\alpha _{b}}}{(t^{\rho _{j}\alpha
_{b}/\alpha _{j}}+t^{\rho _{b}})^{d\beta _{i}/\alpha _{b}}},
\end{equation*}%
then%
\begin{equation*}
F(t)=\frac{t^{\theta _{1}}}{(t^{\theta _{2}}+t^{\theta _{3}})^{\theta
_{4}}(t^{\theta _{5}}+t^{\theta _{3}})^{\theta _{6}}}
\end{equation*}%
where%
\begin{eqnarray*}
\theta _{1} &=&\frac{d\rho _{b}}{\alpha _{b}}+\frac{\sigma _{i}(1+\beta
_{j})+\sigma _{j}(1+\beta _{i})}{2+\beta _{i}+\beta _{j}}, \\
\theta _{2} &=&\frac{\rho _{j}\alpha _{b}}{\alpha _{j}},\ \ \theta _{3}=\rho
_{b},\ \ \theta _{4}=\frac{d\beta _{i}(\beta _{j}+1)}{\alpha _{b}(2+\beta
_{i}+\beta _{j})}, \\
\theta _{5} &=&\frac{\rho _{i}\alpha _{b}}{\alpha _{i}},\ \ \theta _{6}=%
\frac{d\beta _{j}(\beta _{i}+1)}{\alpha _{b}(2+\beta _{i}+\beta _{j})}.
\end{eqnarray*}%
Using the elementary inequality%
\begin{equation*}
(t^{\theta _{2}}+t^{\theta _{3}})^{\theta _{4}}(t^{\theta _{5}}+t^{\theta
_{3}})^{\theta _{6}}\leq (2t^{\max \{\theta _{2},\theta _{3}\}})^{\theta
_{4}}(2t^{\max \{\theta _{5},\theta _{3}\}})^{\theta _{6}},\ \ t>1,
\end{equation*}%
the result follows.\hfill 
\end{proof}

\bigskip

\begin{flushleft}
\textsc{Jos\'{e} Villa Morales} \newline
Universidad Aut\'{o}noma de Aguascalientes\newline
Departamento de Matem\'{a}ticas y F\'{\i}sica\newline
Aguascalientes, Aguascalientes, M\'{e}xico.\newline
\texttt{jvilla@correo.uaa.mx}
\end{flushleft}

\end{document}